\definecolor{lightgray}{rgb}{0.8, 0.8, 0.8}
\definecolor{darkgray}{rgb}{0.7, 0.7, 0.7}
\definecolor{darkblue}{rgb}{0, 0, .4}
\newcounter{todocounter}
\newcommand{\note}[1]{
	\marginpar{{\footnotesize\it #1}}
}
\newcommand{\minisec}[1]{\medskip\noindent{\bf #1.}}
\theoremstyle{plain}
\newtheorem{theorem}{Theorem}[section]
\newtheorem{proposition}[theorem]{Proposition}
\newtheorem{lemma}[theorem]{Lemma}
\newtheorem{corollary}[theorem]{Corollary}
\newtheorem{conjecture}[theorem]{Conjecture}
\theoremstyle{definition}
\newfont{\footsc}{cmcsc10 at 8truept}
\newfont{\footbf}{cmbx10 at 8truept}
\newfont{\footrm}{cmr10 at 10truept}
\renewenvironment{abstract}%
                {
                  \begin{list}{}%
                     {\setlength{\rightmargin}{1in}%
                      \setlength{\leftmargin}{1in}}%
                   \item[]\ignorespaces\begin{small}}%
                 {\end{small}\unskip\end{list}}
\newcommand{\edgeless}{\overline{\mathcal{K}}}
\newcommand{\st}{\::\:}
\newcommand{\Av}{\operatorname{Av}}
\newcommand{\Free}{\operatorname{Free}}
\newcommand{\C}{\mathcal{C}}
\newcommand{\D}{\mathcal{D}}
\newcommand{\J}{\mathcal{J}}
\newcommand{\U}{\mathcal{U}}
\newcommand{\lcw}{\operatorname{lcw}}
\newcommand{\cw}{\operatorname{cw}}
\title{\sc Linear Clique-Width for Hereditary Classes of Cographs}
\author{%
Robert Brignall\footnote{All three authors were partially supported by EPSRC via the grant EP/J006130/1.}\\[-0.25ex]
\small Department of Mathematics and Statistics\\[-0.5ex]
\small The Open University\\[-0.5ex]
\small Milton Keynes, UK
\\[1.5ex]
\and
Nicholas Korpelainen\footnotemark[\value{footnote}]\\[-0.25ex]
\small Mathematics Department\\[-0.5ex]
\small University of Derby\\[-0.5ex]
\small Derby, UK
\\[1.5ex]
\and
Vincent Vatter\footnotemark[\value{footnote}]\footnote{Vatter's research was also partially supported by the National Security Agency under Grant Number H98230-12-1-0207 and the National Science Foundation under Grant Number DMS-1301692.  The United States Government is authorized to reproduce and distribute reprints not-withstanding any copyright notation herein.}\\[-0.25ex]
\small Department of Mathematics\\[-0.5ex]
\small University of Florida\\[-0.5ex]
\small Gainesville, Florida USA
\\[-1.5ex]
}
\date{}
\begin{document}
\maketitle

\pagestyle{main}

\begin{abstract}
The class of cographs is known to have unbounded linear clique-width. We prove that a hereditary class of cographs has bounded linear clique-width if and only if it does not contain all quasi-threshold graphs or their complements. The proof borrows ideas from the enumeration of permutation classes.
\end{abstract}

\section{Introduction}

A variety of measures of the complexity of graph classes have been introduced and have proved useful for algorithmic problems~\cite{corneil:on-the-relation:,courcelle:the-expression-:,courcelle:upper-bounds-to:,courcelle:linear-time-sol:,espelage:how-to-solve-np:,kaminski:recent-developm:,kobler:edge-dominating:,oum:approximating-c:,wanke:k-nlc-graphs-an:,bui-xuan:boolean-width-o:}. We are concerned with \emph{clique-width}, introduced by Courcelle, Engelfriet, and Rozenberg~\cite{courcelle:handle-rewritin:}, and more pertinently, the linear version of this parameter, due to Lozin and Rautenbach~\cite{lozin:the-relative:techreport}, and studied further by Gurski and Wanke~\cite{gurski:on-the-relation:}.

The \emph{linear clique-width} of a graph $G$, $\lcw(G)$, is the size of the smallest alphabet $\Sigma$ such that $G$ can be constructed by a sequence of the following three operations:
\begin{itemize}
\item add a new vertex labeled by a letter in $\Sigma$,
\item add edges between all vertices labeled $i$ and all vertices labeled $j$ (for $i\neq j$), and
\item give all vertices labeled $i$ the label $j$.
\end{itemize}
Such a sequence of operations is referred to as an \emph{lcw expression} for $G$.  An \emph{efficient} lcw expression for $G$ is one using precisely $\lcw(G)$ labels. The distinction between linear clique-width and clique-width is that clique-width expressions also allow one to take the disjoint union of two previously constructed graphs (along with their labels, which need not be distinct).

The \emph{join} of two vertex-disjoint graphs $G$ and $H$, denoted $G\ast H$, is formed from the disjoint union $G\cup H$ by adding all possible edges with one endpoint in $G$ and the other in $H$. In this context, $G$ and $H$ are referred to as the \emph{join components} of the resulting graph. We consider the following families of graphs in this work. The characterisations we give below can be found in numerous places, for instance in the book \emph{Graph Classes: A Survey}~\cite{brandstadt:graph-classes:-:} by Brandst\"adt, Le, and Spinrad.
\begin{itemize}
\item A graph $G$ is a \emph{threshold graph} if, starting with $K_1$, it can be built by repeatedly taking the disjoint union or join of a threshold graph with $K_1$. This class consists of precisely those graphs which avoid $P_4$, $C_4$, and $2K_2$ as induced subgraphs.
\item A graph $G$ is a \emph{quasi-threshold graph} if, starting with $K_1$, it can be built by repeatedly taking the disjoint union of two quasi-threshold graphs, or the join of a quasi-threshold graph with $K_1$. This class consists of precisely those graphs which avoid $P_4$ and $C_4$ as induced subgraphs, and these graphs are also called \emph{trivially perfect}.
\item A graph $G$ is a \emph{cograph} if, starting with $K_1$, it can be built by repeatedly taking the disjoint union or join of two cographs. This class consists of precisely those graphs which avoid $P_4$ as an induced subgraph.
\end{itemize}
Thus, 
$$
\mbox{threshold graphs}
\subsetneq
\mbox{quasi-threshold graphs}
\subsetneq
\mbox{cographs}.
$$

For a class of graphs $\C$, the linear clique-width of $\C$, $\lcw(\C)$, is defined as the maximum value of $\lcw(G)$ over all graphs in $\C$, if it exists. The class of cographs is precisely the class of graphs with clique-width at most $2$ (see~\cite{courcelle:upper-bounds-to:}), but Gurski and Wanke~\cite{gurski:on-the-relation:} showed that it has unbounded linear clique-width.\footnote{In fact, they showed that cographs have unbounded clique-tree-width, a parameter which lies between clique-width and linear clique-width. It should be noted that the quasi-threshold graphs in fact have clique-tree-width $2$.} On the other hand, threshold graphs have linear clique-width at most $2$. (Gurski~\cite{gurski:characterizatio:} showed that a graph has linear clique-width at most $2$ if and only if it avoids $P_4$, $2K_2$, and $\overline{2P_3}$.)

These facts are often cited in the literature (e.g., in ~\cite{heggernes:graphs-of-linea:,heggernes:a-complete-char:}) to demonstrate that linear clique-width is a significant restriction of clique-width.  We sharpen these results by demarcating the precise boundary between subclasses of cographs with bounded and unbounded linear clique-width:

\begin{theorem}
\label{thm-lcw-main}
A hereditary class of cographs has bounded linear clique-width if and only if it contains neither all quasi-threshold graphs nor the complements of all quasi-threshold graphs.
\end{theorem}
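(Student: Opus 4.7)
The plan is to prove the two directions separately. For the forward direction, I would verify that the class of quasi-threshold graphs itself has unbounded linear clique-width---either by checking that the Gurski--Wanke construction~\cite{gurski:on-the-relation:} witnessing unbounded lcw for cographs can be taken to consist of quasi-threshold graphs, or by a direct pebble-style argument showing that a large ``layered'' quasi-threshold graph forces many simultaneously-active labels in any lcw expression. Since linear clique-width is invariant under complementation, the co-quasi-threshold graphs are also unbounded, so any hereditary class containing all of either inherits unboundedness.

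For the converse, assume $\C$ avoids some fixed quasi-threshold graph $Q$ and some fixed co-quasi-threshold graph $\overline{Q'}$. The plan is structural: every $G \in \C$ has a unique cotree, and I would argue that the forbidden subgraphs impose Ramsey-style restrictions on this cotree. Specifically, both the ``union-complexity'' below each $\ast$-node and the ``join-complexity'' below each $\cup$-node should be bounded by a function of $|Q|$ and $|Q'|$: if a $\ast$-node had too many non-trivial branches, a pigeonhole argument would extract $Q$ as an induced subgraph, and dually for $\cup$-nodes and $\overline{Q'}$. These bounds should yield a decomposition of $G$ into a bounded number of ``strips,'' analogous to the grid decompositions used in permutation pattern enumeration. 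From such a decomposition into $k = k(|Q|, |Q'|)$ strips of bounded complexity, an efficient lcw expression using $O(k)$ labels can be produced by processing one strip at a time, using one active label per open strip.

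The main obstacle is the structural step: showing that the forbidden $Q$ and $\overline{Q'}$ actually force the desired grid-like decomposition. This is where the permutation pattern perspective is most naturally deployed. Under the standard correspondence between cographs and separable permutations (with cotree $\cup$- and $\ast$-nodes mapped to direct and skew sums), linear clique-width corresponds to the permutation parameter $\lettericity$, and quasi-threshold graphs correspond to a specific subfamily of separable permutations whose skew nodes always have a singleton child. The statement becomes: a class of separable permutations avoiding two specific ``layered'' patterns is contained in a bounded-width grid class and hence has bounded lettericity. Establishing this grid containment via a Ramsey-type analysis of the separation tree---mirroring arguments used to control simple permutations in enumerative combinatorics---is the technical crux; the translation back from bounded lettericity to bounded lcw should then be a routine check using the definitions.
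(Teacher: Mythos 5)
Both directions of your proposal have genuine gaps, and the second one contains a claim that is false as stated.

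\textbf{Forward direction.} You defer the key fact---that the quasi-threshold graphs themselves have unbounded linear clique-width---to either ``checking'' that the Gurski--Wanke construction lives inside the quasi-threshold graphs or an unspecified pebble argument. Neither is available off the shelf. The Gurski--Wanke witnesses for cographs use joins of two non-trivial graphs, which immediately leaves the quasi-threshold class (where every join component but one must be $K_1$). The paper has to develop new machinery precisely for this: the notion of a \emph{sink label} and an alternating induction (Propositions~\ref{prop-sink-increases} and~\ref{prop-increase-makes-sink}) showing that $G\mapsto G\cup G$ raises the linear clique-width by one whenever every efficient expression is sink-free, and that $G\cup G\mapsto K_1\ast(G\cup G\cup G\cup G)$ restores sink-freeness without raising it. This is roughly half the content of the paper and cannot be waved through. (Your appeal to complementation is fine in spirit; note only that $\lcw$ is not literally invariant under complementation, but changes by at most $1$, which suffices.)

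\textbf{Converse direction.} Your Ramsey step is the real problem. You claim that a $\ast$-node with too many non-trivial branches lets you extract the forbidden quasi-threshold graph $Q$ by pigeonhole (and dually for $\cup$-nodes and $\overline{Q'}$). First, the pairing is backwards: a join of many branches each containing a non-edge yields the complete multipartite graph $\overline{mK_2}$, i.e.\ the complement of a quasi-threshold graph, not $Q$ itself. Second, and more fatally, even with the correct pairing a wide $\cup$-node with non-trivial branches only forces induced subgraphs of the form $mK_2$ (or graphs obtained from whatever happens to sit in the branches), and $mK_2$ does not contain an arbitrary quasi-threshold graph---already $P_3$ is not an induced subgraph of any $mK_2$. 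Containing all quasi-threshold graphs requires unbounded \emph{alternation depth} of the pattern ``disjoint union, then join with $K_1$,'' not unbounded node width, so no complexity measure localized at single cotree nodes can be bounded this way. This is exactly why the paper abandons a direct quantitative decomposition in favour of a non-constructive argument: cographs are wqo, so one may take a minimal counterexample $\C$, which is atomic; Lemma~\ref{lem-closed-one} shows that if $\C$ is closed under disjoint unions (or joins) and every graph of $\C$ embeds in a connected graph of $\C$, then $\C$ swallows all quasi-threshold graphs (or their complements) by a simple induction exploiting closure---this is the mechanism that replaces your pigeonhole. Lemma~\ref{lem-closed-neither} handles the remaining case using atomicity and the descending chain condition to produce a proper subclass $\D$ with $\C\subseteq$ (threshold graphs)$[\D]$, and Proposition~\ref{prop-inflate-lcw} converts that inflation into $\lcw(\C)\le\lcw(\D)+2$. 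If you want to pursue your grid-style decomposition, you would need to replace ``number of non-trivial branches at a node'' by a measure of nested alternation and prove a substantially harder extraction lemma; as written, the step fails.
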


In other words, Theorem~\ref{thm-lcw-main} shows that the quasi-threshold graphs are a minimal class of unbounded linear clique-width, paralleling Lozin's constructions of minimal classes of unbounded clique-width~\cite{lozin:minimal-classes:}.

We prove that the class of quasi-threshold graphs has unbounded linear clique-width in the next section. That the class of their complements also has unbounded linear clique-width follows from the following observation, mentioned in Gurski and Wanke~\cite{gurski:on-the-relation:}.

\begin{proposition}
\label{prop-lcw-complement}
For every graph $G$ we have $\lcw(G)-1\le\lcw(\overline{G})\le\lcw(G)+1$.
\end{proposition}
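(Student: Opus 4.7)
The statement is symmetric: the inequality $\lcw(\overline{G})\le\lcw(G)+1$, applied both to $G$ and to $\overline{G}$ (and using $\overline{\overline{G}}=G$), yields both directions, so it suffices to prove $\lcw(\overline{G})\le\lcw(G)+1$.

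Given an efficient lcw expression $\sigma$ for $G$ on labels $\{1,\ldots,k\}$, I would construct an lcw expression $\sigma'$ for $\overline{G}$ on labels $\{0,1,\ldots,k\}$ by scanning $\sigma$ from left to right and reserving label $0$ as a staging slot for the most recently added vertex. The translation rules are: relabel operations of $\sigma$ are copied verbatim into $\sigma'$; join operations of $\sigma$ are dropped entirely; and each ``add a vertex with label $a$'' instruction of $\sigma$ triggers, in $\sigma'$, a two-step sequence. First, we \emph{commit} any currently staged vertex by performing joins between label $0$ and each of the labels whose vertices are destined \emph{not} to become $G$-neighbors of the staged vertex, then relabeling $0$ to the staged vertex's current label in $\sigma$. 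Afterwards, we add the new vertex under label $0$. A final commit is performed after the last operation of $\sigma$.

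The correctness of the construction rests on the following observation: once two vertices share a label during the execution of $\sigma$, they must share labels at every later time, since relabel operations treat label classes as atomic units. Consequently, the remaining operations of $\sigma$ will join two such vertices to exactly the same set of other vertices; in particular, they have the same $G$-adjacency to any vertex staged after them. At any commit, then, the labels $\{1,\ldots,k\}$ partition cleanly into ``future $G$-neighbor'' and ``future $G$-non-neighbor'' classes relative to the staged vertex, and joining label $0$ with each of the latter introduces precisely the $\overline{G}$-edges needed.

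The technical verification is then that for each pair $v_a, v_b$ with $v_a$ added before $v_b$, the edge $v_a v_b$ appears in $\sigma'$'s output if and only if $v_a v_b \notin E(G)$; this follows because the only opportunity for this edge to be created in $\sigma'$ is at the commit of $v_b$, where the partition argument above gives the correct outcome. The one subtle point — and the main place requiring care — is that identifying the future-neighbor/non-neighbor partition at each commit requires looking ahead in $\sigma$, but since the entire expression is given this look-ahead is unproblematic. The resulting $\sigma'$ uses precisely $k+1$ labels, yielding the bound.
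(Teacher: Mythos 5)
Your overall strategy---reserve one extra label as a staging/insertion slot, drop the join operations of $\sigma$, and re-route each vertex's edge-creation so that it is joined to the label classes of its non-neighbours---is exactly the idea behind the paper's proof (the paper calls your label $0$ an ``insertion label''), and your reduction of the two-sided inequality to the single bound $\lcw(\overline{G})\le\lcw(G)+1$ is also how the paper proceeds. However, there is a genuine gap in the timing of your ``commit'' step. You commit the staged vertex $v$ only when the \emph{next} vertex is about to be inserted, i.e.\ after copying verbatim all relabel operations of $\sigma$ that occur between $v$'s insertion and the next insertion. Such a relabel can merge a label class containing a $G$-neighbour of $v$ (an edge already created by a join of $\sigma$ that you have dropped) with a class containing a non-neighbour of $v$, and then the partition of $\{1,\dots,k\}$ into ``neighbour'' and ``non-neighbour'' classes that your correctness argument relies on simply does not exist. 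Your key observation---that vertices sharing a label share labels forever and hence have the same adjacency to every vertex staged \emph{after they merge}---does not cover $v$, which was staged \emph{before} the merge.

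Concretely, take $G=P_4$ with vertices $u,u',v,w$ and edges $uv,uw,u'w$, built by the efficient $3$-label expression: insert $u$ (label $1$), insert $u'$ (label $2$), insert $v$ (label $3$), join $3$ and $1$, relabel $1\to 2$, insert $w$ (label $1$), join $1$ and $2$. At your commit of $v$ (triggered by the insertion of $w$), the vertices $u$ and $u'$ both carry label $2$, yet $uv\in E(G)$ while $u'v\notin E(G)$; joining label $0$ with label $2$ creates the forbidden edge $uv$ in $\overline{G}$, while omitting the join loses the required edge $u'v$, and no later operation can repair this. The fix is small and recovers the paper's argument: perform the commit \emph{immediately} upon insertion of $v$ (insert $v$ with label $0$, at once join $0$ to every label class all of whose vertices are non-neighbours of $v$ in $G$, then relabel $0$ to $v$'s label in $\sigma$). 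At the moment of insertion every $G$-edge from $v$ to an earlier vertex is still in the future, so your partition argument applies verbatim and the construction is correct with $k+1$ labels.
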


Proposition~\ref{prop-lcw-complement} follows from the fact that $\lcw(\overline{G})\le\lcw(G)+1$. One way to establish this is to note that every graph $G$ has an lcw expression with at most $\lcw(G)+1$ labels in which every vertex is inserted with a particular label reserved for this purpose (which we will call an \emph{insertion label}), and then immediately connected to all of its neighbors which have already been inserted. Thus by changing these edge operations we can build $\overline{G}$ in the same manner,  also with $\lcw(G)+1$ labels.

Throughout this paper, a set of graphs closed under isomorphism is referred to as a \emph{property}, and a property which is closed downward under the induced subgraph partial order is called \emph{hereditary}. As all properties of graphs considered here are hereditary, we use the term \emph{class} to refer to a hereditary property. We assume that the reader is familiar with basic graph notions and terminology as presented in, for example, Diestel's text~\cite{diestel:graph-theory:}.

\section{Quasi-Threshold Graphs have Unbounded Linear Clique-Width}
\label{sec-unbounded-lcw}

Our proof that the quasi-threshold graphs have unbounded linear clique-width follows from a construction inspired by the work of Gurski and Wanke~\cite{gurski:on-the-relation:}, and requires several preliminary results. The crucial difference between our construction and Gurski and Wanke's is the notion of a sink-free lcw expression, defined shortly.

First note that we can, without loss of generality, restrict our attention to lcw expressions in which labels can only be introduced by adding a new vertex with that label. We need two further basic observations about lcw expressions. In the following, we say that a label is in \emph{use} (when constructing a graph from an lcw expression) if, at that point in time, there is a vertex with that label.

\begin{proposition}
\label{lem-all-in-use}
In the process of building the graph $G$ via any efficient lcw expression, there is a point in the process when all $\lcw(G)$ labels are in use.
\end{proposition}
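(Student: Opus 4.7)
My plan is a proof by contradiction. Suppose that for some efficient lcw expression $E$ for $G$ with $k=\lcw(G)$ labels, the set $S_t$ of labels in use at time $t$ satisfies $|S_t|\le k-1$ at every step $t$. I will extract from $E$ another lcw expression $E'$ for the same graph $G$ that uses only $k-1$ labels, contradicting the minimality of $k$.

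The construction processes $E$ step by step while maintaining a time-varying injection $\pi_t\colon S_t \hookrightarrow \{1,\ldots,k-1\}$; this exists at each step by the hypothesis $|S_t|\le k-1$. Every operation of $E$ is mirrored in $E'$ by one operation after relabelling through $\pi_t$. An edge operation between labels $i$ and $j$ becomes an edge operation between $\pi_t(i)$ and $\pi_t(j)$, which remain distinct because $\pi_t$ is injective. A relabel $i\to j$ becomes $\pi_t(i)\to\pi_t(j)$, after which $\pi$ drops $i$ from its domain if $i$ no longer appears in $S_{t+1}$, freeing that colour for future use. When a vertex is inserted with an already-used label $\ell\in S_t$ we reuse the colour $\pi_t(\ell)$; when a vertex is inserted with a fresh label $\ell\notin S_t$, we have $|S_t|\le k-2$ since $|S_{t+1}|\le k-1$, so some colour $c\in\{1,\ldots,k-1\}\setminus\pi_t(S_t)$ is available, and we set $\pi_{t+1}(\ell)=c$ and add the vertex with colour $c$ in $E'$.

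The key invariant to verify is that $\pi_t$ is defined on every label referenced at time $t$. The reduction stated just before the proposition---that labels can only be introduced by adding a new vertex with that label---is precisely what prevents edge or relabel operations from pulling an undefined label into $S_t$, so inductively the domain of $\pi_t$ keeps pace with $S_t$. I expect this bookkeeping---tracking when colours are freed by relabel operations, and confirming that a free colour is always available at each vertex insertion---to be the most delicate part of the argument. Once it is in place, $E'$ clearly constructs the same graph $G$ with only $k-1$ labels, contradicting the efficiency of $E$.
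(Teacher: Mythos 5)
Your proof is correct, but it follows a genuinely different route from the paper's. Both arguments establish the same contrapositive---if no moment of the construction has all $k=\lcw(G)$ labels simultaneously in use, then $k-1$ labels suffice---but the paper does this with an extremal exchange argument: among all offending efficient expressions it chooses one in which the last label $\ell$ to be introduced appears as late as possible, and then swaps $\ell$ with a label that is not in use at that moment throughout the remainder of the expression, pushing the first use of $\ell$ even later and contradicting the extremal choice. Your argument instead performs the entire recoloring in a single pass, simulating the expression while maintaining a time-varying injection $\pi_t$ from the in-use labels into a palette of size $k-1$ (in effect a register-allocation argument), which directly exhibits a $(k-1)$-label expression. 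The paper's version is shorter and hides the bookkeeping inside the extremal choice; yours is more explicitly constructive and isolates the invariant that both proofs ultimately rest on. One point worth making explicit in your write-up: when a fresh label is assigned a spare colour $c\notin\pi_t(S_t)$, you need $c$ to carry no vertices of $E'$ at that moment, since otherwise later edge operations involving $c$ would create spurious edges. This does hold---a colour only leaves the image of $\pi_t$ via a relabel operation, which simultaneously empties it of vertices---but it is the one piece of the ``delicate bookkeeping'' you flag that should be stated as an invariant rather than left implicit.
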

\begin{proof}
Suppose that there is some efficient lcw expression where at most $\lcw(G)-1$ labels are ever in use simultaneously. Among all such lcw expressions for $G$, consider those which have shortest length, in terms of the number of operations in the expression. Among these shortest expressions, choose the one in which the last label to be introduced is introduced as late in the expression as possible, and denote this label by $\ell$. We achieve a contradiction by modifying the expression so that the first use of $\ell$ occurs even later. To do this, we follow the lcw expression up until the point when the label $\ell$ is first due to be used. At this point, there must be another label, say $k$, which is not currently in use (by our hypotheses). In the rest of the lcw expression, we swap all occurrences of $\ell$ for $k$, pushing the first use of $\ell$ later in the expression (but not increasing the number of operations in the expression), establishing the desired contradiction.
\end{proof}

\begin{proposition}
\label{prop-label-order}
Fix an efficient lcw expression for the graph $G$ and a label $\ell$ used in this expression. Then there is an lcw expression for $G$ in which the vertices are inserted in the same order and where vertices labeled $\ell$ are never relabeled.
\end{proposition}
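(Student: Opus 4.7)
The plan is to iterate a label-swapping trick, analogous to the one used in Proposition~\ref{lem-all-in-use}. The goal is to eliminate every operation of the form ``relabel $\ell \to j$'' from the expression; once no such operation appears, any vertex that receives label $\ell$ retains it for the remainder of the construction, as required.

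For the atomic step, suppose the current lcw expression $E$ contains at least one ``relabel $\ell \to j$'' operation, and let $\tau$ denote the first such operation, with target label $j$. I would modify $E$ by swapping every occurrence of the two letters $\ell$ and $j$ in the operations at position $\tau$ and afterwards; in particular the operation at position $\tau$ is changed from ``relabel $\ell \to j$'' into ``relabel $j \to \ell$''. Call the resulting expression $E'$. Correctness---that $E'$ constructs the same graph $G$ in the same vertex-insertion order---would follow from the invariant that from position $\tau$ onwards, the set of vertices carrying label $\ell$ in $E'$ coincides with the set carrying label $j$ in $E$, and vice versa, while all other label-sets agree. This invariant is checked by induction on the operations following $\tau$, case-by-case on the operation type (vertex insertion, edge addition between two labels, and relabel), and relies on the fact that labels are merely names: the vertices themselves and the edges added between them are determined by the label-sets, not the labels. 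The vertex insertion sequence, the set of letters used, and the length of the expression are all preserved.

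The step I expect to require the most care is termination. The atomic step does not immediately guarantee fewer ``relabel $\ell \to \cdot$'' operations overall: any later operation of the form ``relabel $j \to k$'' appearing after $\tau$ in $E$ gets converted to ``relabel $\ell \to k$'' in $E'$, seemingly reintroducing what we just removed. The saving grace is that every such newly introduced offending operation occurs strictly after position $\tau$. Setting $f(E)$ to be the index of the earliest ``relabel $\ell \to \cdot$'' operation (or $|E|+1$ if none exists), I obtain the strict inequality $f(E') > f(E)$ at each iteration, and since $|E|$ is preserved, after finitely many iterations $f$ exceeds the expression length. At that point the resulting expression contains no ``relabel $\ell \to \cdot$'' operation and witnesses the proposition.
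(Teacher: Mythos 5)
Your proposal is correct and follows essentially the same route as the paper: find the first relabelling out of $\ell$, reverse its direction, swap the two letters in everything that follows, and iterate. The only difference is one of detail---you spell out the termination argument (the index of the first offending operation strictly increases) that the paper compresses into ``repeat this process until the desired expression is created.''
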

\begin{proof}
Suppose that the given lcw expression does not preserve $\ell$, and identify the first instance when vertices labeled $\ell$ are relabeled by some other label $k$. Instead, replace this operation by relabeling vertices labeled $k$ to $\ell$, and then swap all subsequent instances of $\ell$ and $k$. Finally, repeat this process until the desired expression is created.
\end{proof}

We say that a label in an lcw expression is a \emph{sink label} if it is never used to create edges nor ever relabeled. In other words, it is a label that can only be given to vertices whose neighborhoods have been completed. We further say that an lcw expression is \emph{sink-free} if no label of this expression is a sink label.

\begin{proposition}
\label{prop-sink-increases}
Suppose that every efficient lcw expression for the graph $G$ is sink-free. Then the graph $H=G\cup G$ satisfies $\lcw(H)=\lcw(G)+1$ and moreover, $H$ has an efficient lcw expression with a sink.
\end{proposition}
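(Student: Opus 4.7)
The plan is to prove the stated equality by establishing the two inequalities separately, with the moreover clause falling out of the upper bound construction. Write $k = \lcw(G)$.

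For the upper bound I will take any efficient lcw expression $\sigma$ for $G$ on the alphabet $\{1,\dots,k\}$, run it to build the first copy $G_1$ of $G$ in $H$, then introduce a fresh $(k+1)$-st label $s$ and issue $k$ relabelings $i\mapsto s$ to consolidate every vertex of $G_1$ under $s$, and finally run $\sigma$ again on $\{1,\dots,k\}$ to build $G_2$. The label $s$ never appears as an operand of an edge operation nor as the source of a relabel, so it is a sink; this yields a $(k+1)$-label expression with a sink, giving both $\lcw(H)\le k+1$ and the moreover statement in one stroke.

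For the lower bound I argue by contradiction: suppose $\phi$ is an lcw expression for $H$ using at most $k$ labels. Applying Proposition~\ref{prop-label-order} to $\phi$, I may assume that some label $\ell$ is never the source of a relabel. Define $\phi_i$ (for $i=1,2$) by deleting from $\phi$ every insertion of a $G_{3-i}$-vertex while retaining all edge and relabel operations. Each $\phi_i$ is an lcw expression for $G_i\cong G$ using at most $k$ labels, and hence is efficient; so by hypothesis each $\phi_i$ is sink-free. Since $\ell$ is not a relabel source in $\phi_i$, its non-sink status forces an edge operation $\eta_{\ell,j}$ that meaningfully creates at least one $G_i$--$G_i$ edge.

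The heart of the proof is combining these two edge operations. A $G_1$-meaningful edge operation involving $\ell$ can only occur at a moment $t$ at which no $G_2$-vertex bears label $\ell$, for otherwise $\phi$ would introduce a cross-component edge; symmetrically, the $G_2$-meaningful operation occurs at some moment $t'$ at which no $G_1$-vertex bears label $\ell$. But because $\ell$ is never relabeled, any vertex ever given label $\ell$ retains it forever, so the set of $G_1$-vertices (and separately, of $G_2$-vertices) with label $\ell$ is monotonically growing in time. Thus whichever of $t,t'$ is later furnishes a contradiction, because a vertex with label $\ell$ then sits in the ``wrong'' component.

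I expect the main obstacle will be handling the meaningful-versus-formal reading of sink-freeness. An intra-$G_2$ edge operation of $\phi$ becomes a no-op in the projection $\phi_1$, and one must ensure that such no-ops do not count as witnesses to $\ell$'s non-sink status in $\phi_1$; this matches the paper's informal explanation that a sink label is one whose vertices have completed neighborhoods, so the interpretation is available.
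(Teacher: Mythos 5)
Your proof is correct. The upper-bound construction and the derivation of the ``moreover'' clause are identical to the paper's. For the lower bound you follow the same broad strategy---normalise via Proposition~\ref{prop-label-order} so that some label $\ell$ is never relabelled, project the putative $k$-label expression onto each copy of $G$, and observe that each projection is an efficient (hence sink-free) expression for $G$---but the endgame differs. The paper takes $\ell$ to be the label of the very first vertex inserted, say $x\in X$, so that $x$ carries $\ell$ for the entire construction; then no edge operation involving $\ell$ can ever add an edge within the other copy $Y$ without creating a forbidden cross edge, so $\ell$ is a sink in the single projection onto $Y$ (with Proposition~\ref{lem-all-in-use} supplying the fact that $\ell$ is genuinely used on $Y$). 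You instead use sink-freeness of \emph{both} projections to extract one meaningful edge operation per copy, at times $t$ and $t'$, and derive a timing contradiction from the monotone growth of the set of $\ell$-labelled vertices in each copy. This is symmetric, requires no special choice of $\ell$, and dispenses with Proposition~\ref{lem-all-in-use} altogether, since you only need that each projection uses all $k$ labels, which follows from its efficiency; the paper's version needs only one projection and one extraction. Your closing concern about no-op edge operations in a projection is legitimate but harmless, and it afflicts the paper's restriction to $Y$ equally: either adopt the paper's gloss that a sink label is one given only to vertices whose neighbourhoods are already complete, or simply delete vacuous edge operations from the projection (preserving efficiency) before invoking the sink-free hypothesis.
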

\begin{proof}
First we prove that $\lcw(G)+1$ is a lower bound on $\lcw(H)$. Suppose to the contrary that $H$ has an lcw expression with $\lcw(G)$ labels. Denote the vertex sets of the two copies of $G$ in $H$ by disjoint sets $X$ and $Y$. By symmetry we may assume that the first vertex which is inserted lies in $X$. Denote this vertex by $x$ and let $\ell$ denote the label of $x$ when it is inserted. By Proposition~\ref{prop-label-order}, there is an lcw expression for $H$ using $\lcw(G)$ labels in which $x$ is still the first vertex inserted and in which vertices labeled $\ell$ (specifically, the vertex $x$) are never relabeled.

Since the restriction of this lcw expression to $Y$ induces an lcw expression for $G$, there is at least one point during the construction where every label is used to label some vertex in $Y$, by Proposition~\ref{lem-all-in-use}. Since $x$ is labeled $\ell$ at this point of the construction, however, no edges may ever be added between a vertex of $Y$ labeled $\ell$ and any other vertex of $Y$, as this would also create an edge between $x$ and a vertex of $Y$. Therefore, in the restriction of this lcw expression to the vertices of $Y$, $\ell$ is a sink label, contradicting our hypotheses.

It is much easier to show that $\lcw(H)\le\lcw(G)+1$: use any lcw expression of $G$ with $\lcw(G)$ labels to create a copy of $G$, replace all of these labels by a new label, and then repeat the lcw expression of $G$ to create a disjoint copy of $G$. Note that the lcw expression we have built for $H$ has a sink label, namely the new label we introduced.
\end{proof}

\begin{proposition}
\label{prop-increase-makes-sink}
Suppose that the graph $G$ is not edgeless and has an efficient lcw expression with a sink. The graph $H = K_1\ast(G\cup G)$ satisfies $\lcw(H)=\lcw(G)$ and moreover, every efficient lcw expression for $H$ is sink-free.
\end{proposition}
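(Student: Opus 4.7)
The plan is to establish $\lcw(H) = \lcw(G)$ via an explicit construction (for the upper bound) combined with the trivial lower bound from induced-subgraph containment, and then prove the sink-free property by contradiction using a restriction argument in the spirit of Proposition~\ref{prop-sink-increases}.

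For the upper bound, let $k = \lcw(G)$ and fix an efficient lcw expression $\sigma$ for $G$ with sink label $k$. I would construct $H$ using $k$ labels in five stages: (1) run $\sigma$ to build the first copy $G^{(1)}$ on labels $\{1, \ldots, k\}$; (2) relabel $j \to k$ for each $j = 1, \ldots, k-1$, moving every first-copy vertex onto the sink label; (3) run $\sigma$ on the second copy $G^{(2)}$, leaving the first-copy intact because $\sigma$ never uses label $k$ in an edge operation or as a relabel source; (4) relabel $j \to 1$ for $j = 2, \ldots, k$, consolidating all of $G \cup G$ onto label $1$; and (5) insert $v$ with label $2$ and perform the edge operation between labels $1$ and $2$ to realize the universal join. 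The construction uses only $k$ labels, giving $\lcw(H) \le \lcw(G)$, and the reverse inequality follows since $G$ is an induced subgraph of $H$.

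For the sink-free property I would proceed by contradiction: suppose $\tau$ is a $k$-label efficient expression for $H$ with sink label $\ell$. Let $x$ be the first vertex of $X = V(G^{(1)})$ that $\tau$ inserts (which exists because $G$ is not edgeless; if the overall first insertion is $v$, then by $X$-$Y$ symmetry one takes $x$ in whichever of $X$ or $Y$ is first-inserted thereafter). Let $\lambda$ be $x$'s initial label; because a vertex placed on the sink gains no further edges while $x$ has $v$ as a neighbor, $\lambda \ne \ell$. Applying Proposition~\ref{prop-label-order} with label $\lambda$, modify $\tau$ into an expression $\tau'$ with the same insertion order in which $x$ remains on $\lambda$ throughout, and with $\ell$ still a sink. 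Following the technique of Proposition~\ref{prop-sink-increases}, the restriction $\tau'|_Y$ to $Y = V(G^{(2)})$ is an efficient $k$-label expression for $Y \cong G$, and Proposition~\ref{lem-all-in-use} forces some $Y$-vertex to share label $\lambda$ with $x$ at some moment; any edge operation in $\tau'$ involving $\lambda$ would then create a forbidden $X$-$Y$ edge through $x$, so no non-vacuous edge operation in $\tau'|_Y$ involves $\lambda$. Thus $\lambda$ is a sink of $\tau'|_Y$, and together with $\ell$ the restriction $\tau'|_Y$ is a $k$-label expression for $G$ with two distinct sink labels. Since neither label participates in an edge operation or as a relabel source, renaming $\ell$ to $\lambda$ throughout yields a valid lcw expression for $G$ on only $k-1$ labels, contradicting $\lcw(G) = k$.

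The main obstacle will be the careful application of Proposition~\ref{prop-label-order} so that the original sink $\ell$ is preserved in $\tau'$: the swap used in that proposition's proof exchanges the pinned label with the target of its first relabeling, and if that target happens to be $\ell$, the swap destroys $\ell$'s sink status. Overcoming this corner case will require either selecting the pinning candidate so that the collision does not arise (for example, an $X$-vertex whose initial label is not the target of any $\lambda$-relabel before $\ell$ appears as such), or refining the pinning construction so that the swap explicitly avoids $\ell$. Once two sinks are in hand, the rename-and-merge step is straightforward.
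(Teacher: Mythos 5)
Your upper-bound construction (build one copy on the sink, rebuild, consolidate, then insert the apex and join) is correct and is essentially the paper's own argument, and the lower bound is immediate; the issue lies entirely in the sink-free half. The gap you flag at the end is not a corner case to be cleaned up later --- it is the crux, and neither of your proposed workarounds can succeed as stated. Here is a concrete way to see this: apart from the single observation that $x$'s initial label differs from $\ell$ (which only needs $x$ to acquire \emph{some} edge after insertion), your argument never uses the apex vertex $z$. Pinning $x$, restricting to $Y$, declaring $\lambda$ a second sink of the restriction, and merging the two sinks would therefore apply verbatim to $G\cup G$ in place of $K_1\ast(G\cup G)$ whenever $G$ is connected. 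But a connected $G$ can certainly have an efficient lcw expression with a sink --- for $P_4=a\!-\!b\!-\!c\!-\!d$, with $\lcw(P_4)=3$, insert $b$ as $1$, $a$ as $2$, add edges between $1$ and $2$, relabel $2\to 3$, insert $c$ as $2$, add edges between $1$ and $2$, relabel $1\to 3$, insert $d$ as $1$, add edges between $1$ and $2$; label $3$ is a sink --- and then $G\cup G$ has an efficient $\lcw(G)$-label expression with the \emph{same} sink (build the first copy, dump it onto $\ell$, build the second). Since your argument would conclude $\lcw(G)\le\lcw(G)-1$ in that situation, the simultaneous pinning of $x$ and preservation of $\ell$ must be genuinely impossible there: $x$ is eventually relabelled onto $\ell$, and absorbing that relabelling as in the proof of Proposition~\ref{prop-label-order} converts $\ell$ into a relabel source. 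Nothing about $H=K_1\ast(G\cup G)$ prevents $x$ from being dumped onto $\ell$ once its neighbourhood (including $z$) is complete, so closing the gap requires using $z$ in an essential way, which your argument does not do.

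For comparison, the paper's proof makes $z$ carry the whole argument and never pins a label. It takes $x$ (resp.\ $y$) to be the vertex whose insertion first brings all $\lcw(G)$ labels into use within $X$ (resp.\ $Y$), these existing by Proposition~\ref{lem-all-in-use} applied to the restrictions, and assumes $x$ precedes $y$. At $x$'s insertion some $x'\in X$ already carries $\ell$; since $x'$ is adjacent to $z$ but can acquire no further edges, $z$ must precede $x$. Hence at $x$'s insertion some $x''\in X$ shares $z$'s label, and $x''$ and $z$ share a label forever after. The edge between $y$ and $z$ must still be created after $y$ is inserted, but any edge operation doing so would also join $y$ to $x''$, a contradiction. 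Your two-sinks-merge observation (an efficient expression cannot have two sink labels) is correct and pleasant in its own right, but to use it here you would still need an argument of roughly the paper's shape to manufacture the second sink.
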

\begin{proof}
First we show that $\lcw(H)=\lcw(G)$. We have $\lcw(H)\ge\lcw(G)$ because linear clique-width is hereditary, so we need only show that $H$ has an lcw expression with $\lcw(G)$ labels. We follow an efficient lcw expression with sink $\ell$ to build one copy of $G$, before relabeling all vertices of this copy with the label $\ell$. We then follow the same lcw expression to build a disjoint copy of $G$, and again relabel all vertices with $\ell$. Since $G$ has an edge, $\lcw(G)\ge 2$, and thus $G$ has at least one non-$\ell$ label. Insert a final vertex of this label and add edges between it and every vertex constructed before.

Now suppose for the sake of contradiction that $H$ has an efficient lcw expression with sink label $\ell$. Denote the vertices of the two copies of $G$ in $H$ by disjoint sets $X$ and $Y$ and let $z$ denote the vertex of $H$ which is adjacent to all of $X\cup Y$. Using Proposition~\ref{lem-all-in-use}, suppose that when this lcw expression is restricted to $X$ (resp., $Y$), the insertion of the vertex $x$ (resp., $y$) marks the first point at which all labels are in use. By symmetry, we may assume that $x$ is inserted before $y$. Thus when $x$ is inserted, there is a vertex $x'\in X$ labeled by $\ell$. Since we assumed that $\ell$ is a sink label in the expression for $H$, $x'$ can never acquire another edge. Therefore $z$ must have been inserted before $x'$, and hence also before $x$. At the point $x$ is inserted, there is a vertex $x''\in X$ with the same label as $z$. Since $x''$ and $z$ will always now have the same label, and $x''$ is inserted before $y$, it is now impossible to add the edge between $y$ and $z$ without adding an edge between $y$ and $x''$, a contradiction.
\end{proof}

Although not necessary for our upcoming construction, note that for $G$ not satisfying the hypotheses of Proposition~\ref{prop-increase-makes-sink}, it is still possible to compute the linear clique-width of $H=K_1\ast(G\cup G)$. If $G$ does not have an edge, then $H=K_1\ast \overline{K}_n$ for some $n$ so $\lcw(H)=2$. In the other case, if all efficient lcw expressions for $G$ are sink-free, then Proposition~\ref{prop-sink-increases} shows that $\lcw(H)\ge\lcw(G\cup G)=\lcw(G)+1$. In fact, $\lcw(H)=\lcw(G)+1$, as can be seen by following the lcw expression given in the proof of Proposition~\ref{prop-sink-increases} to build $G\cup G$, then relabeling every vertex with the same label, inserting a new vertex with a different label, and adding all edges between this vertex and the rest of the graph.

%

We now have the necessary tools to prove one half of Theorem~\ref{thm-lcw-main}.

\begin{corollary}
\label{thm-lcw-main-first-half}
The class of quasi-threshold graphs has unbounded linear clique-width.
\end{corollary}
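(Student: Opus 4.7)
The plan is to construct a sequence of quasi-threshold graphs $F_0, F_1, F_2, \ldots$ with $\lcw(F_n) = n + 2$ by alternating the two operations analysed in Propositions~\ref{prop-sink-increases} and~\ref{prop-increase-makes-sink}, maintaining throughout the induction the invariant that every efficient lcw expression for $F_n$ is sink-free.

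For the base case I would take $F_0 = K_2$. This graph is quasi-threshold (being $K_1 \ast K_1$), satisfies $\lcw(K_2) = 2$, and any efficient lcw expression for it is forced to use each of the two labels to create the single edge, so neither label can be a sink. Hence the inductive invariant holds at $n=0$.

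For the inductive step, assuming $F_n$ satisfies the invariant, set $F_{n+1} = K_1 \ast \bigl((F_n \cup F_n) \cup (F_n \cup F_n)\bigr)$. Writing $G_n = F_n \cup F_n$, Proposition~\ref{prop-sink-increases} applied to $F_n$ gives $\lcw(G_n) = \lcw(F_n) + 1$ and produces an efficient lcw expression for $G_n$ with a sink. Since $G_n$ contains an edge, Proposition~\ref{prop-increase-makes-sink} applied to $G_n$ then gives $\lcw(F_{n+1}) = \lcw(G_n) = \lcw(F_n) + 1$ and forces every efficient expression for $F_{n+1}$ to be sink-free, closing the induction with $\lcw(F_{n+1}) = n + 3$. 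Each $F_{n+1}$ remains quasi-threshold because the disjoint union of two quasi-threshold graphs and the join of a quasi-threshold graph with $K_1$ are again quasi-threshold, and both are the only operations used in passing from $F_n$ to $F_{n+1}$.

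The only delicate point I anticipate is the base case. One might be tempted to start from $K_1$, but its sole label is trivially a sink, so the hypothesis of Proposition~\ref{prop-sink-increases} fails there; one really needs at least one edge to seed the induction, which is why $K_2$ is the natural choice. Aside from this, the argument is a clean alternation between doubling and joining with a universal vertex, driven entirely by the two propositions.
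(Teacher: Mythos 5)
Your proposal is correct and is essentially identical to the paper's own proof: the same four-copies-plus-universal-vertex construction $K_1\ast((G\cup G)\cup(G\cup G))$ seeded at $K_2$, alternating Propositions~\ref{prop-sink-increases} and~\ref{prop-increase-makes-sink} to gain one label per step while toggling the sink-free invariant. Your explicit check that $K_2$ (rather than $K_1$) is needed to start the induction is a nice touch the paper leaves implicit.
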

\begin{proof}
Set $G_1=K_2$, and for $k\ge 1$ define $G_{k+1}$ by
$$
G_{k+1} = K_1\ast((G_k\cup G_k)\cup (G_k\cup G_k)).
$$
Clearly these are all quasi-threshold graphs. By inductively applying Proposition~\ref{prop-sink-increases} to $G_k$, and then inserting the outcome for $G_k\cup G_k$ into Proposition~\ref{prop-increase-makes-sink}, we conclude that $\lcw(G_k) = k+1$ for all $k\ge 1$, proving the result.
\end{proof}

In the proof above, note that we took four copies of $G_k$ to form $G_{k+1}$ merely for convenience; there may well be a construction to prove Corollary~\ref{thm-lcw-main-first-half} which adds fewer vertices at each step.

As already observed, Proposition~\ref{prop-lcw-complement} implies that the class of complements of quasi-threshold graphs also has unbounded linear clique-width.

\section{Structural Notions}

Our proof of the other half of Theorem~\ref{thm-lcw-main} requires the notions of the substitution (or modular) decomposition, well-quasi-ordering, and atomicity. Here we briefly review these notions and some other facts required in our proof.

Let $G$ be a graph on the labeled vertices $\{1,\dots,n\}$ and $(H_1,\dots,H_n)$ a sequence of graphs. The \emph{inflation} of $G$ by $H_1,\dots,H_n$ is the graph on the disjoint union $H_1\cup\cdots\cup H_n$ with additional edges added between all vertices of $H_i$ and all vertices of $H_j$ if and only if $i$ and $j$ are adjacent in $G$. We denote this graph by $G[H_1,\dots,H_n]$. 

Given classes $\C$ and $\D$, the inflation of $\C$ by $\D$, which we denote $\C[\D]$, is the set of all inflations $G[H_1,\dots,H_n]$ where $G$ is a labeling of a graph from $\C$ and $H_1,\dots,H_n\in\D$. The relationship between the substitution decomposition and the (not necessarily linear) clique-width parameter was thoroughly studied by Courcelle and Olariu~\cite{courcelle:upper-bounds-to:}, who established that $\cw(\C[\D])=\max\{\cw(\C),\cw(\D)\}$. For linear clique-width, we observe the following.

\begin{proposition}
\label{prop-inflate-lcw}
For classes $\C$ and $\D$ of graphs with bounded linear clique-width, $\lcw(\C[\D])\le\lcw(\C)+\lcw(\D)$.
\end{proposition}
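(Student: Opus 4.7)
The plan is to take a graph $F = G[H_1,\dots,H_n] \in \C[\D]$ and build it by simulating an efficient lcw expression for $G \in \C$ on top of efficient lcw expressions for the $H_i \in \D$, using two disjoint alphabets.

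Fix an efficient lcw expression $\mathcal{E}_G$ for $G$ using an alphabet $\Sigma_1$ with $|\Sigma_1|=\lcw(\C)$, and for each $i$ an efficient lcw expression $\mathcal{E}_i$ for $H_i$ using an alphabet $\Sigma_2$ (the same alphabet, not depending on $i$) with $|\Sigma_2|=\lcw(\D)$; assume $\Sigma_1 \cap \Sigma_2 = \emptyset$. I will construct an lcw expression for $F$ over $\Sigma_1 \cup \Sigma_2$ by walking through $\mathcal{E}_G$ and replacing each operation as follows. Suppose $\mathcal{E}_G$ inserts vertex $i$ with label $\ell_i \in \Sigma_1$; in the new expression, at that moment, carry out $\mathcal{E}_i$ in full to build a copy of $H_i$ using only $\Sigma_2$-labels, and then relabel every $\Sigma_2$-label to $\ell_i$, leaving the entire copy of $H_i$ uniformly labeled $\ell_i$. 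Each edge or relabel operation of $\mathcal{E}_G$, which only refers to labels in $\Sigma_1$, is copied verbatim.

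The correctness check is immediate. At any moment during the simulation of $\mathcal{E}_i$, the vertices previously built (belonging to $H_1,\dots,H_{i-1}$) all carry $\Sigma_1$-labels, while the in-progress vertices of $H_i$ carry $\Sigma_2$-labels; since $\Sigma_1$ and $\Sigma_2$ are disjoint, the internal edge operations of $\mathcal{E}_i$ add no spurious edges. Once all of $H_i$ is built and relabeled $\ell_i$, the state is exactly as if $\mathcal{E}_G$ had just inserted vertex $i$ with label $\ell_i$, except each singleton vertex of $G$ has been replaced by the appropriate $H_i$. Consequently, a subsequent edge operation from $\mathcal{E}_G$ joining labels $\ell_i$ and $\ell_j$ adds precisely all edges between $V(H_i)$ and $V(H_j)$, and a relabel operation of $\mathcal{E}_G$ merges the label classes just as it would for the corresponding vertices of $G$. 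The resulting graph is exactly $F$.

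For the label count, at every point in the construction the labels in use are a subset of the $\Sigma_1$-labels in use at the corresponding point of $\mathcal{E}_G$ together with the $\Sigma_2$-labels in use in the current $\mathcal{E}_i$; hence at most $|\Sigma_1|+|\Sigma_2| = \lcw(\C) + \lcw(\D)$ labels are ever in use, and (because of our convention that labels are only introduced by vertex insertions, cf.\ the discussion preceding Proposition~\ref{lem-all-in-use}) this bounds the total number of labels of the expression.

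There is no real obstacle here beyond bookkeeping; the only thing to be slightly careful about is to handle the trivial edge case where $G$ or some $H_i$ has no vertices, which is dealt with by simply skipping the corresponding step, and to note that reusing the single alphabet $\Sigma_2$ for all of the $H_i$ is legitimate because each $H_i$ has been frozen into a single $\Sigma_1$-label before the next $\mathcal{E}_{i+1}$ starts.
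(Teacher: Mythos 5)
Your proposal is correct and is essentially identical to the paper's own proof: both simulate an efficient lcw expression for $G$, replacing each vertex insertion by a full construction of the corresponding $H_i$ on a disjoint set of $\lcw(\D)$ labels, followed by relabeling that copy to the label of $v_i$. The extra correctness bookkeeping you supply is fine but does not change the argument.
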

\begin{proof}
Take any graph $G\in\C$ with $n$ vertices labeled by $\{1,\dots,n\}$, and any sequence $(H_1,\dots,H_n)$ of graphs in $\D$. We want to find an lcw expression for $G[H_1,\dots,H_n]$ with at most $\lcw(G)+\max\{\lcw(H_i)\}$ labels.  We build this expression by following an efficient lcw expression for $G$. Whenever this expression instructs us to insert the vertex $v_i\in G$, we instead use a disjoint set of $\lcw(H_i)$ labels to create a copy of $H_i$. Once this copy of $H_i$ is completed, we relabel all of its vertices to the label given to $v_i$ in the lcw expression of $G$ and continue.
\end{proof}

Letting $\edgeless$ denote the class of all edgeless graphs, note that $\lcw(\C[\edgeless])=\lcw(\C)$; indeed, it is known (see \cite{heggernes:graphs-of-linea:}) that graphs of linear clique-width at most $2$ are precisely the inflations of threshold graphs by $\edgeless$ (as mentioned in the introduction, these are also the graphs which avoid $P_4$, $2K_2$, and $\overline{2P_3}$).

A class of finite graphs is said to be \emph{well-quasi-ordered (wqo)} under the induced subgraph partial order if given any infinite sequence $G_1$, $G_2$, $\dots$ of graphs in the class, we can find indices $i<j$ such that $G_i$ is an induced subgraph of $G_j$. The folklore result below (see Kruskal~\cite{kruskal:the-theory-of-w:}) gives several equivalent conditions. In the statement of this result we say that the subclasses of the class $\C$ satisfy the \emph{descending chain condition} if every infinite descending chain of subclasses
$$
\C\supseteq\D_1\supseteq\D_2\supseteq\cdots
$$
contains a minimum element, that is, in any such sequence there is an index $i$ such that $\D_j=\D_i$ for all $j\ge i$.

\begin{theorem}[folklore]
\label{thm-wqo-folklore}
Let $\C$ be a class of finite graphs. The following are equivalent:
\begin{enumerate}
\item[(a)] $\C$ is wqo under the induced subgraph order,
\item[(b)] $\C$ does not contain an infinite antichain,
\item[(c)] given any infinite sequence $G_1$, $G_2$, $\dots$ of graphs of $\C$, there is an infinite ascending subsequence, i.e., a sequence $G_{i_1}$, $G_{i_2}$, $\dots$ such that each graph in the sequence is an induced subgraph of the next graph in the subsequence,
\item[(d)] the subclasses of $\C$ satisfy the descending chain condition.
\end{enumerate}
\end{theorem}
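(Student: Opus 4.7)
The plan is to establish the cycle (a) $\Rightarrow$ (b) $\Rightarrow$ (c) $\Rightarrow$ (a), and separately the equivalence (a) $\Leftrightarrow$ (d). Two of the links are immediate and I would dispose of them first: (a) $\Rightarrow$ (b) holds because an infinite antichain is by definition a sequence with no $i<j$ satisfying $G_i\le G_j$; and (c) $\Rightarrow$ (a) holds because the first two terms of any ascending subsequence already witness wqo for the original sequence.

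The main substantive step is (b) $\Rightarrow$ (c), which I would prove via Ramsey's theorem for pairs. Given an infinite sequence $G_1,G_2,\dots$ in $\C$, I color each pair $\{i,j\}$ with $i<j$ by one of three colors according to whether $G_i$ is an induced subgraph of $G_j$ (``up''), $G_j$ is a proper induced subgraph of $G_i$ (``down''), or $G_i$ and $G_j$ are incomparable (``side''). Ramsey's theorem gives an infinite monochromatic subsequence. A ``down'' subsequence would produce an infinite strictly decreasing sequence of vertex counts, which is impossible for finite graphs; a ``side'' subsequence is an infinite antichain, contradicting (b); so the subsequence must be ``up'', which is exactly an ascending subsequence.

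For (a) $\Rightarrow$ (d) I argue by contrapositive. Suppose the descending chain condition fails, so there is a chain $\C\supseteq\D_1\supseteq\D_2\supseteq\cdots$ with no minimum; by passing to a subchain I may assume every containment is strict. Choose $G_i\in\D_i\setminus\D_{i+1}$. If (a) held, some $i<j$ would satisfy $G_i\le G_j$; but $G_j\in\D_j\subseteq\D_{i+1}$ and $\D_{i+1}$ is hereditary, which forces $G_i\in\D_{i+1}$, a contradiction. For the converse it suffices to prove (d) $\Rightarrow$ (b) and invoke the first cycle: given an infinite antichain $A_1,A_2,\dots$ in $\C$, I let $\D_n$ consist of those graphs in $\C$ that contain none of $A_1,\dots,A_n$ as an induced subgraph. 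Each $\D_n$ is a hereditary subclass of $\C$, and because the $A_i$ are pairwise incomparable, $A_{n+1}\in\D_n\setminus\D_{n+1}$, so $\D_n\supsetneq\D_{n+1}$ for every $n$ and the chain has no minimum, violating (d).

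The only nontrivial ingredient is Ramsey's theorem in (b) $\Rightarrow$ (c); the rest is a mechanical exercise in manipulating hereditary subclasses, and I expect the writeup to be short.
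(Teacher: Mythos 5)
Your proof is correct. The paper does not actually prove this theorem---it is stated as folklore with a citation to Kruskal---and your argument (the three-colour Ramsey step for (b)~$\Rightarrow$~(c), using that the induced subgraph order on finite graphs is well-founded to rule out the ``down'' colour, together with the standard extraction of a strictly descending chain of hereditary avoidance classes from an antichain for the equivalence with (d)) is exactly the standard one, with no gaps.
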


Cographs are known to be wqo (see Damaschke~\cite{damaschke:induced-subgrap:}), and we make use of both parts (c) and (d) of Theorem~\ref{thm-wqo-folklore} in our proof.

Note that there is no general relation between linear clique-width and well-quasi-ordering: in one direction, cographs are wqo but have unbounded linear clique-width, while the set of all cycles forms an infinite antichain in the induced subgraph partial order, but each cycle has linear clique-width at most $4$. 

Our final concept is atomicity. We say that the class $\C$ is \emph{atomic} if it cannot be written as the union of two proper subclasses.  This condition is equivalent to the \emph{joint embedding property}: for every pair of graphs in $\C$ there is another graph in $\C$ which contains both. (Fra{\"{\i}}ss{\'e}~\cite{fraisse:sur-lextension-:} seems to have been the first to study these notions.) We will need to make use of the combination of atomicity and well-quasi-order via the proposition, which appears to be folklore. As it is relatively unknown, for completeness we give a proof.

\begin{proposition}
Every wqo class of graphs is a finite union of atomic classes.
\end{proposition}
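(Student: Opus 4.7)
The plan is to argue by contradiction, using part (d) of Theorem~\ref{thm-wqo-folklore}: any strictly descending chain of subclasses of $\C$ must stabilize, so it suffices to produce an infinite strictly descending chain under the assumption that $\C$ is not a finite union of atomic subclasses.

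First I would observe the following splitting principle: if a class $\D$ fails to be a finite union of atomic classes, then in particular $\D$ is not itself atomic, so $\D = \A \cup \B$ for some proper subclasses $\A, \B \subsetneq \D$, and at least one of $\A, \B$ must also fail to be a finite union of atomic classes (otherwise $\D$, being the union of these two finite unions, would be a finite union of atomic classes, contradicting the hypothesis on $\D$).

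Using this principle I would recursively define a sequence of subclasses $\C = \D_0 \supsetneq \D_1 \supsetneq \D_2 \supsetneq \cdots$, starting with $\D_0 = \C$. Assuming $\D_i$ is not a finite union of atomic subclasses, apply the splitting principle to pick $\D_{i+1}$ to be the piece in the decomposition of $\D_i$ that is again not a finite union of atomic subclasses; then $\D_{i+1}$ is a proper subclass of $\D_i$, and we may continue. This produces an infinite strictly descending chain of subclasses of $\C$, directly contradicting Theorem~\ref{thm-wqo-folklore}(d).

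The only subtlety is confirming the splitting principle, which is essentially a bookkeeping observation about finite unions, so I do not expect any genuine obstacle. One minor point worth stating explicitly in the write-up is that a union of two classes (i.e., hereditary properties) is itself a class, so the recursion really does stay within the setting of hereditary subclasses of $\C$ to which Theorem~\ref{thm-wqo-folklore} applies.
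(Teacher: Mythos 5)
Your argument is correct, and it rests on the same two ingredients as the paper's proof: repeatedly splitting a non-atomic class into two proper subclasses, and the descending chain condition from Theorem~\ref{thm-wqo-folklore}(d). The difference is organisational. The paper builds the entire binary splitting tree rooted at $\C$, observes that DCC forbids infinite paths, and then invokes K\"onig's Lemma to conclude the tree is finite, reading off the atomic classes as its leaves. You instead argue by contradiction, using your ``splitting principle'' (at least one piece of a non-atomic decomposition must again fail to be a finite union of atomic classes) to descend along a single branch where the failure persists, producing an infinite strictly descending chain directly. Your route avoids K\"onig's Lemma entirely and needs no appeal to the tree being finitely branching, at the cost of being a pure existence-by-contradiction argument; the paper's version is slightly more constructive in spirit, since the finite tree exhibits the decomposition explicitly. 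Your closing remark that the union of two hereditary classes is hereditary, so the recursion stays within the scope of Theorem~\ref{thm-wqo-folklore}, is exactly the right point to make explicit.
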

\begin{proof}
Suppose that $\C$ is a wqo class. We recursively build a binary tree whose root is labeled by $\C$. Consider a node labeled by $\D$. If $\D=\D_1\cup\D_2$ for proper subclasses $\D_1$ and $\D_2$ then we add children labeled by $\D_1$ and $\D_2$ to this node. Otherwise $\D$ is atomic and nodes labeled $\D$ in the binary tree are leaves. Because $\C$ is wqo it does not have an infinite descending chain of subclasses by Theorem~\ref{thm-wqo-folklore} (d), so this tree contains no infinite paths and thus is finite by K\"onig's Lemma.  Its leaves give the desired atomic classes.
\end{proof}

\section{The Other Half of Theorem~\ref{thm-lcw-main}}
\label{sec-proof-of-main}

Having shown that the quasi-threshold graphs have unbounded linear clique-width, this section concerns the proof of the other direction of Theorem~\ref{thm-lcw-main}. Our proof, which is inspired by the work of Albert, Atkinson, and Vatter~\cite{albert:subclasses-of-t:} on permutation classes, begins by proving that every infinite subclass $\C$ of cographs either contains all quasi-threshold graphs or their complements, or is contained in the inflation of threshold graphs by a proper subclass $\D\subsetneq\C$. 

If $\C$ were closed under both disjoint unions and joins then it would contain all cographs. Thus $\C$ is either closed under one of these operations or under neither of them. If it is closed under disjoint unions or joins, the following lemma gives what we need. Note that both the class of edgeless graphs and the class of complete graphs are subclasses of threshold graphs.

\begin{lemma}
\label{lem-closed-one}
Let $\C$ be a nonempty subclass of cographs that is closed under either disjoint unions or joins. Either $\C$ is contained in the inflation of the edgeless or complete graphs by a proper subclass $\D\subsetneq\C$, or $\C$ contains all quasi-threshold graphs or all of their complements.
\end{lemma}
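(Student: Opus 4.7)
The plan is to reduce to the case that $\C$ is closed under disjoint unions; the dual case where $\C$ is closed under joins will follow by taking complements, since threshold graphs are closed under complement, inflation commutes with complement in the sense that $\overline{G[H_1,\ldots,H_n]}=\overline{G}[\overline{H_1},\ldots,\overline{H_n}]$, and the two nontrivial conclusions of the lemma swap under complementation. Assuming $\cup$-closure, I take $\D$ to be the hereditary class generated by the connected members of $\C$, so $\D\subseteq\C$. Every $G\in\C$ is the disjoint union of its connected components $G_1,\ldots,G_k$, each of which lies in $\D$, and the edgeless graph $\overline{K_k}$ is a threshold graph, so the identity $G=\overline{K_k}[G_1,\ldots,G_k]$ exhibits $G$ as an inflation of a threshold graph by $\D$-graphs. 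Thus $\C$ is contained in the inflation of the threshold graphs by $\D$, and if $\D\subsetneq\C$ strictly we are in the first conclusion of the lemma.

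The remaining case is $\D=\C$, which unpacks to the statement that every graph in $\C$ is an induced subgraph of some connected cograph in $\C$. Here the goal is to show that $\C$ contains every quasi-threshold graph. Since every quasi-threshold graph is either $K_1$, a disjoint union of two smaller quasi-threshold graphs, or of the form $K_1\ast Q'$ for a smaller quasi-threshold $Q'$, and since $K_1\in\C$ and $\C$ is closed under $\cup$, a straightforward induction on the cotree delivers this conclusion once we establish that $\C$ is closed under $G\mapsto K_1\ast G$. To prove the latter, fix $G\in\C$ (we may assume $G$ is nonempty) and consider $G\cup K_1\in\C$, which is disconnected. By the case hypothesis it sits as an induced subgraph in some connected $H\in\C$; writing $H=A\ast B$ as a join, the disconnectedness of $G\cup K_1$ forces all of its vertices into a single side, say $V(G\cup K_1)\subseteq V(A)$. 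Any vertex $b\in V(B)$ is then adjacent in $H$ to every vertex of $V(G)\subseteq V(A)$, so the induced subgraph of $H$ on $\{b\}\cup V(G)$ is isomorphic to $K_1\ast G$, and hence $K_1\ast G\in\C$.

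The main obstacle is precisely this last construction: converting the hypothesis that every graph in $\C$ has a connected $\C$-supergraph into genuine closure of $\C$ under $K_1\ast$. The trick of padding $G$ with an extra isolated vertex before invoking the hypothesis is what forces the embedding into $H=A\ast B$ to respect the join decomposition, freeing up a vertex of $B$ to play the role of the required universal vertex. Everything else---the cotree induction producing all quasi-threshold graphs from $K_1$, $\cup$, and $K_1\ast{-}$, and the translation from the $\cup$-closed to the $\ast$-closed case by complementation---should be routine.
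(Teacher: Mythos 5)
Your proposal is correct and follows essentially the same route as the paper: reduce to the disjoint-union case, split on whether the downward closure of the connected graphs is a proper subclass (giving the inflation of edgeless, hence threshold, graphs) or all of $\C$, and in the latter case pad $G$ with an isolated vertex so that its connected supergraph's join decomposition yields a universal vertex, giving closure under $K_1\ast{-}$ and hence all quasi-threshold graphs by induction. The paper phrases this as a minimal-counterexample argument and handles the join case ``by symmetry,'' but the substance is identical to yours.
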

\begin{proof}
Pick a subclass $\C$ of cographs, which by symmetry we may assume to be closed under disjoint unions. Let $\C_c$ denote the downward closure of the connected graphs in $\C$. If $\C_c$ is a proper subclass of $\C$ then $\C\subseteq\edgeless[\C_c]$, where $\edgeless$ denotes the class of edgeless graphs (all of which are threshold graphs).

If $\C_c=\C$, then every graph in $\C$ is contained in a connected graph in $\C$. In this case, we prove by induction on $|V(G)|$ that every quasi-threshold graph $G$ is contained in $\C$. The base case $|V(G)|=1$ is trivial, as $\C$ is nonempty. Now suppose that $|V(G)|\ge 2$. If $G$ is a disjoint union then we are done because $\C$ is closed under disjoint unions. Otherwise, $G=H\ast K_1$ for a graph $H$ which lies in $\C$ by induction. Again because $\C$ is closed under disjoint unions, $H\cup K_1\in\C$, and because $\C=\C_c$, $H\cup K_1$ is contained as an induced subgraph in a connected graph in $\C$. Finally, because every connected cograph is a join, we see that $\C$ contains $(H\cup K_1)\ast K_1$, and thus also $G$, as desired.
\end{proof}


The remaining case is that where $\C$ is closed under neither disjoint unions nor joins. Our proof of Theorem~\ref{thm-lcw-main} will seek a minimal counterexample, and since cographs are wqo, we can appeal to Theorem~\ref{thm-wqo-folklore} (d) to establish that a minimal counterexample must be atomic. Consequently, we can restrict the following lemma to atomic subclasses of cographs.

\begin{lemma}
\label{lem-closed-neither}
Let $\C$ be an infinite atomic subclass of cographs. If $\C$ is closed under neither disjoint unions nor joins then it is contained in the inflation of the threshold graphs by a proper subclass $\D\subsetneq\C$.
\end{lemma}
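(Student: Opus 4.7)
The plan is to concentrate the two closure failures into a single witness cograph $F\in\C$ and then take $\D := \{G\in\C \suchthat F\text{ is not an induced subgraph of }G\}$. Since $\C$ is closed under neither operation, there exist $G_1,G_2\in\C$ with $G_1\cup G_2\notin\C$ and $H_1,H_2\in\C$ with $H_1\ast H_2\notin\C$. Applying the joint embedding property of the atomic class $\C$ repeatedly, there is some $F\in\C$ containing all four of $G_1,G_2,H_1,H_2$ as induced subgraphs. Since $F\cup F$ has $G_1\cup G_2$ as an induced subgraph, heredity of $\C$ forces $F\cup F\notin\C$, and symmetrically $F\ast F\notin\C$. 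By taking $F$ large enough (which is possible since $\C$ is nontrivial), we may also assume $|V(F)|\ge 2$, so that $K_1\in\D$ and the base of the induction below is covered.

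Now $\D$ is hereditary by definition, and $F\in\C\setminus\D$ makes it a proper subclass. I will prove $\C\subseteq\mathcal{T}[\D]$, where $\mathcal{T}$ denotes the class of threshold graphs, by induction on $|V(G)|$ for $G\in\C$. If $G\in\D$ there is nothing to do, since $G$ is then a trivial inflation of the one-vertex threshold graph. Otherwise $F\subseteq G$, and because $G$ is a cograph with $|V(G)|\ge 2$ we may write either $G = M_1\cup\cdots\cup M_k$ or $G = M_1\ast\cdots\ast M_k$ with $k\ge 2$ and each $M_i$ an induced subgraph of $G$ (hence in $\C$).

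The key non-packing observation is that at most one of the $M_i$ can have $F$ as an induced subgraph: two distinct such $M_i$ would produce disjoint induced copies of $F$ joined either by no edges (in the union case) or by all edges (in the join case), exhibiting an induced $F\cup F$ or $F\ast F$ in $G$ and contradicting $G\in\C$. Consequently some $M_i$ lies in $\D$, and this $M_i$ is either isolated or universal in $G$. Apply the induction hypothesis to $G\setminus M_i\in\C$ to obtain $G\setminus M_i = T'[D_1,\dots,D_m]$ with $T'\in\mathcal{T}$ and $D_j\in\D$; then setting $T := T'\cup K_1$ or $T := T'\ast K_1$ (whichever matches the role of $M_i$) yields another threshold graph, and a flattened expression $G = T[D_1,\dots,D_m,M_i]\in\mathcal{T}[\D]$. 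The non-packing observation is the crux of the proof — it is the only place where both closure-failure hypotheses are used simultaneously — and once $F$ has been chosen, the rest is routine bookkeeping on the top-level cograph decomposition.
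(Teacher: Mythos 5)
Your proof is correct, and it takes a genuinely different route from the paper's. The paper attaches to each $G\in\C$ the classes $\U_G^+=\{H\in\C : G\cup H\in\C\}$ and $\U_G^-$ of graphs ``at least as unionable'' as $G$, invokes the well-quasi-ordering of cographs (through both the ascending-subsequence and descending-chain characterizations) to show that only finitely many distinct such classes occur, and unions them---together with the analogous join-side classes---into a proper subclass $\U\cup\J$ with the property that every disconnected graph in $\C$ has a component in $\U$ and every connected one a join component in $\J$; atomicity enters only to guarantee that this finite union of proper subclasses is still proper. You instead use atomicity head-on: joint embedding concentrates both closure failures into a single graph $F$ with $F\cup F\notin\C$ and $F\ast F\notin\C$, and your $\D$ is simply the $F$-free part of $\C$. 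Your non-packing observation---at most one part of any top-level union or join decomposition can contain $F$---then plays exactly the role of the paper's component/join-component claim, and the threshold skeleton is assembled by the same peeling induction. What your version buys is economy: it avoids well-quasi-ordering entirely (Damaschke's theorem is not needed for this step) and gives an explicit one-graph description of the proper subclass. What the paper's version reflects is the permutation-class machinery of Albert, Atkinson, and Vatter from which the argument was adapted, where the finer $\U_G^{\pm}$ structure is wanted; for the lemma as stated, your argument suffices. (Both proofs quietly set aside the degenerate case where $\C$ contains no graph on two or more vertices; your ``take $F$ large enough'' remark handles this at least as carefully as the paper's unexplained assertion that $K_1\in\U\cup\J$.)
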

\begin{proof}
Let $\C$ be an atomic subclass of cographs which is closed under neither disjoint unions nor joins.  For every graph $G\in\C$, let $\U_G^+$ denote the set
$$
\U_G^+=\{H\in\C\st G\cup H\in\C\}
$$
of those graphs whose disjoint unions with $G$ lie in $\C$. We further define the set
\begin{eqnarray*}
\U_G^-
&=&
\{H\in \C\st \mbox{for every $H'\in\C$, if $G\cup H'\in\C$ then $H\cup H'\in\C$}\},\\
&=&
\{H\in \C\st H\cup H'\in\C\mbox{ for all } H'\in U_G^+\}
\end{eqnarray*}
of all graphs which are ``at least as unionable'' as $G$.

First note that $\U_G^+$ and $\U_G^-$ are classes. Moreover, if for some graph $G\in\C$ we were to have $\U_G^+=\U_G^-=\C$, then that would mean that for every graph $H\in\U_G^-=\C$ and every graph $H'\in\U_G^+=\C$ their disjoint union $H\cup H'$ lies in $\C$, contradicting our assumption that $\C$ is not closed under disjoint unions. Thus for every graph $G\in\C$, we define the proper subclass
$$
\U_G
=
\left\{\begin{array}{ll}
\U_G^+&\mbox{if $\U_G^+\neq\C$, or}\\
\U_G^-&\mbox{otherwise (in which case $\U_G^-\neq\C$ by the above).}
\end{array}\right.
$$

Observe that passing from $G$ to $\U_G$ reverses inclusions for $\U_G^+$, but not for $\U_G^-$: i.e., if $H$ is an induced subgraph of $G$ then $\U_G^+\subseteq\U_H^+$,  but $\U_H^-\subseteq \U_G^-$. Note also that if $\U_G^+ = \U_H^+$, then we also have $\U_G^- = \U_H^-$.

We first claim that the number of \emph{distinct} classes of the form $\U_G^+$ is finite.  Suppose to the contrary that there are infinitely many distinct classes $\U_{G_1}^+,\U_{G_2}^+,\dots$. By Theorem~\ref{thm-wqo-folklore} (c), the sequence $G_1$, $G_2$, $\dots$ contains an infinite ascending sequence $G_{i_1}$, $G_{i_2}$, $\dots$. Our observation above then shows that
$$
\U_{G_{i_1}}^+\supsetneq \U_{G_{i_2}}^+\supsetneq\cdots.
$$
This, however, contradicts the descending chain condition (Theorem~\ref{thm-wqo-folklore} (d)).  Thus there are only finitely many distinct classes $\U_G^+$, and from this we can conclude that there are also only finitely many distinct classes $\U_G^-$ (since there is at most one such class for every class $\U_G^+$). Thus we have established that there are only finitely many distinct classes $\U_G$, so there is a finite set $S\subseteq\C$ such that every class $\U_G$ (with $G\in\C$) is of the form $\U_H$ for some $H\in S$.

Define
$$
\U=\bigcup_{H\in S}\U_H.
$$
As each $\U_G$ is a proper subclass of $\C$ and $\C$ is atomic, we know that $\U$ is also a proper subclass of $\C$. We claim that every disconnected graph in $\C$ has a connected component contained in $\U$. To see this, consider a disconnected graph $A\cup B\in\C$. By definition, $A\in\U_B^+$ and $B\in\U_A^+$, so we are done unless both $\U_A^+$ and $\U_B^+$ are equal to $\C$. In this case we see that $\U_A=\U_A^-$ and the disjoint union of $B$ with every graph in $\C$ lies in $\C$, so $B\in\U_A$, as desired.

Recall that every cograph is either a disjoint union or a join. Thus by an analogous argument (or by applying the above argument to the complement class $\overline{\C}$), we see that there is a proper subclass $\J\subsetneq\C$ such that every connected graph in $\C$ contains a join component in $\J$. Moreover, because $\C$ is atomic, we see that $\U\cup\J\subsetneq\C$.

It is now easy to establish by induction on $|V(G)|$ that every graph $G\in\C$ is contained in the inflation of a threshold graph by graphs in $\U\cup\J$.  This fact is clear for $|V(G)|=1$ because $K_1$ must be contained in $\U\cup\J$, and all larger graphs have either a component or a join component in $\U\cup\J$.
\end{proof}

We conclude with a proof of our main theorem.

\newenvironment{proof-thm-lcw-main}{\emph{Proof of Theorem~\ref{thm-lcw-main}.}}{\hfill$\qed$}

\begin{proof-thm-lcw-main}
Suppose that the theorem was false and choose a minimal counterexample, say $\C$, by Theorem~\ref{thm-wqo-folklore} (d). Note that $\C$ must be an infinite class, it cannot contain all quasi-threshold graphs or their complements, and (by the comments before Lemma~\ref{lem-closed-neither}) $\C$ is atomic. 

$\C$ cannot be closed under both disjoint unions and joins, else it contains all cographs. In this case, $\C$ must be contained in the inflation of the threshold graphs by a proper subclass $\D\subsetneq\C$: this follows by Lemma~\ref{lem-closed-one} if $\C$ is closed under disjoint unions or joins (since $\C$ cannot contain all quasi-threshold graphs or their complements), and by Lemma~\ref{lem-closed-neither} if not.

Finally,  Proposition~\ref{prop-inflate-lcw} shows that $\lcw(\C)\le\lcw(\D)+2$, which is finite by our choice of $\C$.
\end{proof-thm-lcw-main}

\minisec{Acknowledgements}
We are grateful to the anonymous referees, whose careful and detailed reports greatly improved the presentation of this paper, and also to Michael Albert,  Jay Pantone, Sang-il Oum, Jisu Jeong and Hojin Choi for their conversations and corrections.

\bibliographystyle{acm}
\bibliography{lcw-cographs}

\end{document}